\def\lim{\mathop\textrm{lim}}
\def\Exp#1{\raise 1pt \hbox{$e$}^{#1}}
\def\diff{\,\mathrm d}
\def\Cov{\mathrm{Cov}}
\def\Trace{\mathrm{Tr}}
\newtheorem{thm}{Theorem}[section]
\newtheorem{cor}[thm]{Corollary}
\newtheorem{lem}[thm]{Lemma}
\newtheorem{prop}[thm]{Proposition}
\theoremstyle{definition}
\theoremstyle{remark}
\newtheorem{rem}[thm]{Remark}
\numberwithin{equation}{section}
\newcommand{\abs}[1]{\left\vert#1\right\vert}
\newcommand{\fer}[1]{(\ref{#1})}
\newcommand{\R}{\mathbb{R}}
\newcommand{\Mk}{\mathcal{M}(k)}
\newcommand{\Mnk}{\mathcal{M}(n,k)}
\newcommand{\tMnk}{\widetilde{\mathcal{M}}(n,k)}
\newcommand{\Onk}{\mathcal{O}(n,k)}
\newcommand{\On}{\mathcal{O}(n)}
\newcommand{\Ok}{\mathcal{O}(k)}
\newcommand{\Pk}{\mathcal{P}(k)}
\newcommand {\vp} {\varphi}
\newcommand {\lb} {\lambda}
\newcommand {\Chi} {{\bf \raise 1.5pt \hbox{$\chi$}}}
\title{Derivation of a Formula of Blaschke and Petkantschin using Probabilistic Ideas}
\date{\today}
\author{Mohsen Sharifitabar\thanks{sharifitabar@sharif.ir}}
\affil{Dept.\ of Mathematical Sci., Sharif Univ.\ of Tech.}
\begin{document}
\maketitle
\baselineskip1.1\baselineskip
\parindent0pt
\parskip1ex

\begin{abstract}
We give a new proof for the well-known Blaschke--Petkantschin formula 
which is based on the polar decomposition of rectangular matrices
and may be of interest in random matrix theory.
\end{abstract}

{\bfseries Keywords.} Blaschke--Petkantschin formula, Polar decomposition, Random matrices.

{\bfseries AMS subject classifications.} 60D05, 60B20.

\section{Introduction} \label{Sec:intro}

The Blaschke--Petkantschin formula is an integration formula which was introduced by Blaschke\cite{refBlash} and Petkantschin\cite{refPetkan}. Since then, this formula and its generalizations was developed as a powerful tool in many fields, e.g. Analysis, Stereology, Stochastic Geometry \cite{alishahi_sharifitabar_2008, baddeley2004stereology, calka2019poisson, kabluchko2019expected, bonnet2017monotonicity, gotze2019random}. In its classical form, it can be interpreted as a decomposition of $k$-fold product measure of $n$-dimensional Euclidean space. However it has been restated and generalized by many authors; see \cite{moller1985simple, rubin2018blaschke, Forrester2017MatrixPD, moghadasi_2012}. Most of these works have used differential forms. In this paper we present an elegant proof for Blaschke--Petkantschin formula (in matrix form) by a different approach, deploying random Gaussian matrices and their properties.

\section{Matrix Polar Integration Formula and Its Proof} \label{Sec:polin}

Using polar coordinates, one can reduce an integral of a
radial function over $n$-dimensional space to a one-dimensional
integral. The appropriate generalization in this context is to
replace radial with the property that the values of the function
depend only on the relative positions of arguments. 

Before proceeding further, we present some notations which are used throughout this paper.

\paragraph{Some Notations} Let $n,k\in\mathbb{N}$ , $(n\geq k)$.

\begin{itemize}
\item $\Mnk$ : All $n\times k$ real matrices 
\item $\tMnk$ : $\{X\in\Mnk|\ \mathrm{rank}(X)=k\}$ \\
\indent ($\tMnk$ is open and dense in $\Mnk$ if the latter considered as $\R^{nk}$) 
\item $\Onk$ (Orthogonal $n\times k$ matrices) : $\{O\in\Mnk|\ O^T O=1_k\}$ 
\item $\Pk$ (Positive-definite matrices) : $\{P\in\Mk|\ P>0\}$
\end{itemize}

The following theorem makes this generalization precise.

\begin{thm}[Polar Integration Formula (PIF)] \label{thm:pif}
Let $n,k\in\mathbb{N}$, $n\geq k$ and $\vp:
(\R^n)^k\longrightarrow\R$ be orthogonally-symmetric i.e. for any
$O\in O(n)$, $\vp(Ox_1,\ldots ,Ox_k)=\vp(x_1,\ldots ,x_k)$. Then:
\begin{align*}\int_{(\R^n)^k}\vp(x_1,\ldots ,x_k)&\diff x_1\cdots \diff x_k
\\&=\int_{(\R^k)^k}\vp(x_1,\ldots ,x_k)C_{n,k}|\det[x_1,\ldots
,x_k]|^{n-k}\diff x_1\cdots \diff x_k\end{align*} where in
$\vp(x_1,\ldots ,x_k)$ of the right-hand side, $x_1,\ldots ,x_k\in\R^k$ are considered as
vectors in $\R^n$ via the natural embedding of $\R^k$ in $\R^n$,
$[x_1,\ldots ,x_k]$ is the $k\times k$ matrix whose columns are
$x_1,\ldots ,x_k$ and $C_{n,k}$ is a constant.
\end{thm}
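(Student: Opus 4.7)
My plan is to exploit the $\On$-invariance of $\vp$ to reduce both sides of the claimed identity to integrals over the positive-definite cone $\Pk$, and then to identify the resulting measures using the Wishart density for the Gram matrix of a Gaussian random matrix.

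First, I would reformulate both sides as integrals over $\Pk$. For $X\in\tMnk$ with rectangular polar decomposition $X = UP$, the Stiefel factor $U\in\Onk$ extends to some $\tilde U\in\On$ satisfying $X = \tilde U\begin{pmatrix}P\\0\end{pmatrix}$. By $\On$-invariance, $\vp(X) = \vp(\begin{pmatrix}P\\0\end{pmatrix}) =: \psi(X^T X)$ for a well-defined function $\psi:\Pk\to\R$. The same function governs the right-hand side: for invertible $Y\in\mathcal{M}(k,k)$, the polar decomposition $Y = QP'$ with $Q\in\Ok$ and the lift of $Q$ to $\mathrm{diag}(Q,I_{n-k})\in\On$ yield $\vp(Y) = \psi(Y^T Y)$.

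Next, I would identify both integrals via the Wishart density. Let $G$ be an $n\times k$ matrix of iid $N(0,1)$ entries; the density of $W = G^T G$ on $\Pk$ is proportional to $(\det W)^{(n-k-1)/2}e^{-\Trace W/2}$. Computing $\mathbb{E}[f(G^T G)]$ both as a Gaussian integral over $\Mnk$ and as an integral against this density, and then substituting $f(W) = \psi(W)e^{\Trace W/2}$ to cancel the Gaussian factor, produces
\[
\int_{\Mnk} \psi(X^T X)\diff X \;=\; \frac{\pi^{nk/2}}{\Gamma_k(n/2)}\int_{\Pk}\psi(W)(\det W)^{(n-k-1)/2}\diff W.
\]
Applying the same identity with $n$ replaced by $k$ to the right-hand side of the theorem, and using $|\det Y|^{n-k} = (\det Y^T Y)^{(n-k)/2}$ to absorb the extra determinant power into the exponent on $W$, yields the same Lebesgue integral on $\Pk$ but with $\pi^{nk/2}/\Gamma_k(n/2)$ replaced by $\pi^{k^2/2}/\Gamma_k(k/2)$. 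The theorem then follows with $C_{n,k} = \pi^{(n-k)k/2}\,\Gamma_k(k/2)/\Gamma_k(n/2)$.

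The hard part will be justifying the Wishart density formula for $G^T G$, and this is exactly where the "polar decomposition of rectangular matrices" from the abstract does the real work: one writes $G = UP$, notes that $U$ is Haar-distributed on the Stiefel manifold $\Onk$ and independent of $P = W^{1/2}$, and computes the Jacobian of the change of variables $X\mapsto(U,W)$, which produces the $(\det W)^{(n-k-1)/2}$ factor. Once this is in hand, the rest of the argument is routine manipulation of Gamma functions and even yields $C_{n,k}$ in closed form.
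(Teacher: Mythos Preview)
Your proposal is correct and runs parallel to the paper's argument, but with a different parameterization of the positive part. The paper proves an intermediate ``Matrix Polar Integration Formula'' (MPIF) in the variables $(O,P)$ with $P=\sqrt{X^TX}\in\Pk$: it shows, via an explicit Jacobian computation, that
\[
\diff X = D_{n,k}(\det P)^{n-k}\prod_{i<j}(\lambda_i+\lambda_j)\,\diff O\,\diff P,
\]
and then applies this twice (once for $n$, once for $k$) exactly as you do. You instead work with $W=X^TX=P^2$ and the Wishart density $(\det W)^{(n-k-1)/2}$, which hides the $\prod(\lambda_i+\lambda_j)$ factor inside the change of variables $P\mapsto P^2$ on $\Pk$. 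Both routes rest on the same probabilistic core---independence of the Stiefel and positive-definite factors of a Gaussian matrix---so your acknowledged ``hard part'' is essentially the paper's main computation in different coordinates. What you gain is the explicit constant $C_{n,k}=\pi^{(n-k)k/2}\Gamma_k(k/2)/\Gamma_k(n/2)$ directly from the Wishart normalization, whereas the paper's MPIF leaves $D_{n,k}$ unevaluated and recovers $C_{n,k}$ only afterward (and there only for even $n-k$) via moments of Gaussian determinants. What the paper gains is MPIF itself as a standalone result with its eigenvalue factor displayed; your Wishart shortcut bypasses that statement entirely.
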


In this paper, we prove theorem \fer{thm:pif} using random
matrix point of view. In fact, what we are about to prove is a stronger theorem
in matrix form which is of its own interest.

\begin{prop}[Polar Decomposition] \label{prop:poldec} A matrix $X\in \tMnk$ can be decomposed
uniquely as $X=OP$ where
$O\in\Onk$ and $P\in\Pk$.
\end{prop}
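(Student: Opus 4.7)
The plan is to build $P$ and $O$ from the Gram matrix $X^T X$, and then to establish uniqueness by squaring and invoking uniqueness of the positive-definite square root. First I would observe that for any $X\in\tMnk$ the symmetric matrix $X^T X$ lies in $\Pk$: indeed, for every nonzero $v\in\R^k$,
\[
v^T (X^T X) v = \norm{Xv}^2 > 0,
\]
because $X$ has full column rank $k$.

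Next I would invoke the spectral theorem on $\Pk$. Any $A\in\Pk$ diagonalizes as $A = U^T D U$ with $U\in\Ok$ and $D$ diagonal with strictly positive entries, and $A^{1/2} := U^T D^{1/2} U$ is an element of $\Pk$ whose square is $A$; the uniqueness of such a square root follows from the fact that on each eigenspace of $A$ any positive-definite square root must act as multiplication by the positive square root of the corresponding eigenvalue. Applying this to $A = X^T X$ I would set
\[
P := (X^T X)^{1/2} \in \Pk, \qquad O := X P^{-1} \in \Mnk.
\]
A direct computation gives $O^T O = P^{-1} (X^T X) P^{-1} = P^{-1} P^2 P^{-1} = 1_k$, so $O \in \Onk$, and $O P = X$ by construction; this settles existence.

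For uniqueness, I would suppose $X = O' P'$ with $O' \in \Onk$ and $P' \in \Pk$. Then
\[
X^T X = (P')^T (O')^T O' P' = (P')^2,
\]
hence $(P')^2 = P^2$. Uniqueness of the positive-definite square root forces $P' = P$, and then $O' = X (P')^{-1} = X P^{-1} = O$. The only delicate ingredient is the uniqueness of the positive square root within $\Pk$, which I expect to be the main (and minor) hurdle; beyond it the argument reduces to two lines of matrix algebra, so I do not anticipate any serious obstacle, since the statement is simply the standard finite-dimensional analogue of the polar decomposition of a linear operator.
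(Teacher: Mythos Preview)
Your argument is correct and follows essentially the same route as the paper: from $X^TX=P^2$ you obtain $P=\sqrt{X^TX}$ and $O=XP^{-1}$, then verify $O^TO=1_k$ and deduce uniqueness from the uniqueness of the positive square root. The only difference is that you spell out the justifications (why $X^TX\in\Pk$, existence and uniqueness of $\sqrt{\;\cdot\;}$ via the spectral theorem) that the paper leaves implicit.
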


\begin{proof} If $P$ and $O$ are as above, then:
\[X^TX=P^TO^TOP=P^2\quad{=\hspace*{-0.6mm}\Rightarrow}\quad P=\sqrt{X^TX}.\]
\[O=XP^{-1}=X(\sqrt{X^TX})^{-1}\quad(uniqueness)\]
It can be easily checked that these $P$ and $O$ satisfy the desired
conditions (\emph{existence}).
\end{proof}

\paragraph{Two Observations:}
\begin{enumerate}
\item[(i)] To any $x_1,\ldots,x_k\in\mathbb{R}^k$, assign
$X=[x_1,\ldots,x_k]\in\Mnk$ so $\vp:(\R^n)^k\longrightarrow\R$ can
be considered as a real-valued function on $\Mnk$. Now the
orthogonal symmetry of $\vp$ reads $\vp(X)=\vp(UX)$ for all
$U\in\On$ or equivalently if $X=OP$ is the polar decomposition of
$X$, $\vp$ depends only on the positive-definite part of $X$, i.e.
$P$.\\
\item[(ii)] By Polar Decomposition Theorem,
$\tMnk\approx\Onk\times\Pk$. Therefore if one fixes appropriate measures
$\diff X$, $\diff O$ and $\diff P$ on $\Mnk$, $\Onk$ and $\Pk$ respectively, any
integral on $\Mnk$ can be written as an integral over
$\Onk\times\Pk$ after multiplying by the appropriate Jacobian
factor.
\end{enumerate}

\begin{prop} \label{prop:1} Let $n\geq k$. Then $\On$ acts on $\Onk$ (by multiplication on the left).
There exist a unique probability measure $\mu^*$ on $\Onk$
invariant under this action. Moreover, $\mu^*$ is also invariant
under the action of $\Ok$ (by multiplication on the right).
\end{prop}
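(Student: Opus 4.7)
The plan is to construct $\mu^*$ as the push-forward of the (bi-invariant) Haar probability measure on the compact group $\On$ under an orbit map, and then to read off both invariance properties and uniqueness from standard properties of Haar measure.

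First I would fix the base point $O_0=\begin{pmatrix}I_k\\0\end{pmatrix}\in\Onk$ and define the orbit map $\pi:\On\to\Onk$ by $\pi(U)=UO_0$. A quick observation is that $\pi$ is surjective and that the $\On$-action on $\Onk$ is transitive: given any $O\in\Onk$, its $k$ orthonormal columns can be completed to an orthonormal basis of $\R^n$, which produces a $U\in\On$ with $UO_0=O$. Let $\mu_{\On}$ denote the Haar probability measure on the compact group $\On$, and set $\mu^*:=\pi_*\mu_{\On}$. Then $\mu^*$ is manifestly a probability measure.

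Next I would verify left-$\On$-invariance and uniqueness. For $V\in\On$ and a Borel set $B\subset\Onk$, the identity $\pi^{-1}(V^{-1}B)=V^{-1}\pi^{-1}(B)$ combined with left-invariance of $\mu_{\On}$ gives $\mu^*(V^{-1}B)=\mu^*(B)$. For uniqueness, let $\nu$ be any $\On$-invariant probability measure on $\Onk$ and let $f:\Onk\to\R$ be continuous. Using left-invariance of $\nu$ together with Fubini, I would write
\[
\int_{\Onk}f\,d\nu=\int_{\On}\int_{\Onk}f(U\cdot O)\,d\nu(O)\,d\mu_{\On}(U)=\int_{\Onk}\Bigl(\int_{\On}f(U\cdot O)\,d\mu_{\On}(U)\Bigr)d\nu(O).
\]
By transitivity, for each fixed $O$ there is $V_O\in\On$ with $V_O O_0=O$, and bi-invariance of Haar (valid since $\On$ is compact) gives $\int_{\On}f(U\cdot O)\,d\mu_{\On}(U)=\int_{\On}f(UV_O\cdot O_0)\,d\mu_{\On}(U)=\int f\,d\mu^*$, independent of $O$. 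Plugging back in shows $\int f\,d\nu=\int f\,d\mu^*$, hence $\nu=\mu^*$.

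For right-$\Ok$-invariance, the key trick is to absorb the right action on $\Onk$ into a left action on $\On$. For $W\in\Ok$, set $\widetilde W=\begin{pmatrix}W&0\\0&I_{n-k}\end{pmatrix}\in\On$; then $O_0 W=\widetilde W O_0$, so $\pi(U)\cdot W=U O_0 W=U\widetilde W O_0=\pi(U\widetilde W)$. Therefore the right-$W$-action on $\Onk$ is the push-forward under $\pi$ of right-translation by $\widetilde W$ on $\On$, which preserves $\mu_{\On}$ by right-invariance of Haar. Hence $\mu^*$ is $\Ok$-invariant on the right.

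The only mildly delicate step is the uniqueness argument, where one must be careful to justify interchanging the order of integration and to use both left and right invariance of $\mu_{\On}$ via compactness of $\On$; once transitivity of the $\On$-action is in hand, the remaining bookkeeping is routine.
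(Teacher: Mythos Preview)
Your argument is correct. Both you and the paper exploit the same underlying facts---compactness of $\On$ and $\Ok$, transitivity of the left $\On$-action on $\Onk$, and uniqueness of Haar measure---but the packaging differs in a few places worth noting. For \emph{existence}, the paper starts from an arbitrary Borel probability measure on $\Onk$ and averages it simultaneously over $\On$ on the left and $\Ok$ on the right, obtaining both invariances at once; you instead push Haar measure forward along the orbit map $U\mapsto UO_0$, which gives left-invariance immediately, and then recover right-$\Ok$-invariance by the neat block-embedding identity $O_0W=\widetilde W O_0$ that converts the right $\Ok$-action into a right translation on $\On$. For \emph{uniqueness}, the paper pulls a putative invariant measure back to $\On$ via $K\mapsto\tilde\mu(K\cdot x_0)$ and identifies it with Haar, whereas you run the standard Weil-type averaging/Fubini argument; your version is arguably cleaner, since the paper's pull-back is not literally countably additive when the stabilizer of $x_0$ is nontrivial (though the intended homogeneous-space identification $\Onk\cong\On/\mathrm{Stab}(O_0)$ is of course valid). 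In short: same ingredients, slightly different recipe, and your route sidesteps a small technical wrinkle in the paper's uniqueness step.
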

\begin{proof}
 Both $G=\On$ and $H=\Ok$ are compact Lie groups and
so possess unique Haar probability measures $\mu_G$ and $\mu_H$.
Now $G$ and $H$ act on $X=\Onk$ from left and right, respectively. The
action of $G$ is transitive and $g.(x.h)=(g.x).h$ for any $g\in G$
and $h\in H$ and $x\in X$. Now let $\mu$ be any arbitrary
probability (Borel) measure on X and for any $A\subseteq X$
define:
\[\mu^*(A)=\int_G\int_H\mu(g.A.h)\diff\mu_H(h)\diff\mu_G(g).\]
It is obvious that $\mu^*$ is invariant under the action of $G$ and
$H$. Conversely, for any $G$-invariant probability measure
$\tilde\mu$ on $X$, one can fix some $x_0\in X$ and define $\mu$ on
$G$ as $\mu(K)=\tilde\mu(K.x_0)$. Therefore $\mu$ is a probability measure
because the action is transitive. It is also invariant under
multiplication of G and hence it should be the unique Haar
probability measure on $G$, i.e. $\mu_G$. This means
$\tilde\mu(K.x_0)=\mu_G(K)$ for any $K\subseteq G$. One can replace
$\tilde\mu$ by $\mu^*$ everywhere to conclude
$\tilde\mu(K.x_0)=\mu_G(K)=\mu^*(K.x_0)$. Uniqueness follows by
using the transitivity of the action one more
time.
\end{proof}

We will refer to $\mu^*$ in the above theorem as the \emph{homogeneous measure} on $\Onk$ and integrate functions on $\Onk$
with respect to this measure.

\begin{thm}[Matrix Polar Integration Formula (MPIF)] \label{th:matpif}
For any function $\vp:\Mnk\longrightarrow\mathbb{R}$,
\[\int_{\Mnk}\vp(X)\diff X = \int_{\Pk}\int_{\Onk}\vp(OP)D_{n,k}(\det P)^{n-k}\;\prod_{i<j}(\lb_i+\lb_j)\diff O\diff P\]
where $\diff X=\mathop\prod_{i,j}\diff X_{ij}$, $\diff P=\mathop\prod_{i\leq
j}\diff P_{ij}$ and $\diff O$ is the homogeneous probability measure on $\Onk$
and $\lb_1\geq\ldots\geq\lb_k>0$ are the eigenvalues of $P$ and
$D_{n,k}$ is a constant.
\end{thm}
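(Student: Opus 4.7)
The idea is to pull Lebesgue measure $\diff X$ back through the polar decomposition $X=OP$ and compute the resulting density on $\Onk\times\Pk$ in two stages: first use the invariance afforded by Propositions~\ref{prop:poldec} and~\ref{prop:1} to reduce the Jacobian to a function of the eigenvalues of $P$, then pin it down by testing the formula against a Gaussian integrand and matching Laplace transforms in a parameter.

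\textbf{Step 1: the Jacobian depends only on the eigenvalues of $P$.} Since $\Psi:(O,P)\mapsto OP$ is a diffeomorphism from $\Onk\times\Pk$ onto the conull set $\tMnk$, the pullback of $\diff X$ can be written $J(O,P)\,\diff O\,\diff P$. Because $UX=(UO)P$ remains polar for any $U\in\On$, and both $\diff X$ and $\diff O$ are $\On$-invariant (Proposition~\ref{prop:1}), we have $J(UO,P)=J(O,P)$; transitivity of the $\On$-action on $\Onk$ forces $J(O,P)=J(P)$. The analogous right action by $Q^T\in\Ok$ sends $X$ to $XQ^T$ with polar decomposition $(OQ^T)(QPQ^T)$, and since $\diff O$ is right-$\Ok$-invariant and $\diff P$ is invariant under conjugation $P\mapsto QPQ^T$ (which is a Frobenius-orthogonal linear map on symmetric matrices, hence has determinant $\pm 1$ in any translation-invariant coordinates), we further obtain $J(P)=J(QPQ^T)$. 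Thus $J$ depends only on the eigenvalues $\lb_1,\ldots,\lb_k$.

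\textbf{Step 2: Gaussian test.} Apply the target formula with $\vp(X)=\exp(-\tfrac12\Trace(AX^TX))$ for an arbitrary $A\in\Pk$. Writing $A=B^TB$ and substituting $Y=XB^T$ (Jacobian $(\det B)^n$) evaluates the left-hand side as $(2\pi)^{nk/2}(\det A)^{-n/2}$, while the right-hand side reduces to $\int_{\Pk}e^{-\Trace(AP^2)/2}\,J(P)\,\diff P$. Change variables $M=P^2$: the Jacobian of $P\mapsto P^2$ commutes with $\Ok$-conjugation, so depends only on eigenvalues, and at a diagonal $P=\Lambda$ one reads off $\prod_{i\leq j}(\lb_i+\lb_j)=2^k(\det P)\prod_{i<j}(\lb_i+\lb_j)$ from the differential $\Lambda\,\diff P+\diff P\,\Lambda$. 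Comparing with the matrix gamma identity
\begin{equation*}
\int_{\Pk}e^{-\Trace(AM)/2}(\det M)^{(n-k-1)/2}\,\diff M=\Gamma_k(n/2)\,2^{nk/2}(\det A)^{-n/2}
\end{equation*}
and invoking uniqueness of the multivariate Laplace transform identifies $J(P)=D_{n,k}(\det P)^{n-k}\prod_{i<j}(\lb_i+\lb_j)$ with $D_{n,k}=2^k\pi^{nk/2}/\Gamma_k(n/2)$.

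\textbf{Main obstacle.} The delicate ingredient is the matrix gamma identity, which is essentially the normalization of the Wishart density and therefore morally equivalent to what we are proving. A self-contained, non-circular derivation is precisely where the ``random matrices'' of the abstract earn their keep: applying Gram--Schmidt to a standard Gaussian $X$ yields $X=QR$ with $Q$ uniform on $\Onk$ and $R$ upper triangular with explicit independent entries (Bartlett's decomposition: $R_{ii}^2\sim\chi^2_{n-i+1}$, $R_{ij}\sim N(0,1)$ for $i<j$), from which the density of $M=R^TR$, and hence its Laplace transform, follow by the routine Cholesky Jacobian $\diff M=2^k\prod_i R_{ii}^{k+1-i}\,\diff R$. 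Everything else in the argument is either invariance-theoretic or an eigenvalue calculation.
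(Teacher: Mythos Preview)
Your argument is correct but diverges from the paper's after the invariance reduction. Both proofs first show that the polar Jacobian factors as $g(O)f(P)$ with $f$ depending only on the eigenvalues of $P$; the paper packages the same left/right invariance you invoke into Proposition~\ref{prop:2}, Corollary~\ref{crl:1}, and Lemma~\ref{lm:2}. From there the paper \emph{computes} $f$ at a diagonal $P=\mathrm{diag}(\lb_1,\ldots,\lb_k)$ by writing out the full $nk\times nk$ Jacobian of $(O,P)\mapsto OP$ in local coordinates, factoring eigenvalues from rows and columns, and block-triangularizing via an explicit orthogonal matrix built from the columns of $O$; the factors $(\det P)^{n-k}$ and $\prod_{s<t}(\lb_s+\lb_t)$ drop out of this linear algebra directly, while $D_{n,k}$ is left as the determinant of a residual block that does not involve the $\lb_i$'s. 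Your Step~2 instead \emph{identifies} $f$ indirectly, by matching the Laplace transform $A\mapsto\int_{\Pk}e^{-\Trace(AP^2)/2}f(P)\,\diff P$ against the matrix-gamma integral and invoking injectivity; the Bartlett/Cholesky route you sketch for the latter is indeed non-circular. The trade-off: the paper's computation is entirely self-contained and requires no external identities, whereas your approach bypasses the block manipulation, ties the result to Wishart theory, and delivers the explicit constant $D_{n,k}=2^k\pi^{nk/2}/\Gamma_k(n/2)$, which the paper's direct argument does not determine.
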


\paragraph{Matrix Polar Integration Formula $\Rightarrow$ Polar Integration Formula:}
\begin{proof}
It was noted in observation (i) that the left hand side of PIF can
be written as $\int_{\Mnk}\vp(X) dX$. Now using MPIF and noting
that  $\vp(OP)$ is only a function of $P$ (as mentioned in
observation (i)), we obtain,
\[\int_{(\R^n)^k}\vp(x_1,\ldots ,x_k)\diff x_1\cdots \diff x_k=
\int_{\Pk}\vp(P)D_{n,k}(\det
P)^{n-k}\prod_{i<j}(\lb_i+\lb_j)\diff P.\]
Once again, using
MPIF for $n=k$, substituting $\vp(X)|\det X|^{n-k}$ for $\vp$ and
noting that for a $k\times k$ matrix $X$, $|\det X|=\det P$ where
$X=OP$ is the polar decomposition, one obtains:
\begin{align*}\int_{(\R^k)^k}\vp(x_1,\ldots,x_k)&\abs{\det[x_1,\ldots ,x_k]}^{n-k}\diff x_1\cdots \diff x_k \\&= \int_{\Pk}\vp(P)D_{n,k}(\det P)^{n-k}\prod_{i<j}(\lb_i+\lb_j)\diff P.\end{align*}
Comparing these two equalities completes the proof.
\end{proof}

\begin{rem}
It is clear from the above proof that
$C_{n,k}=\frac{D_{n,k}}{D_{k,k}}$.  Our proof does not evaluate
$C_{n,k}$, but using some results from random determinants it can
be computed for even $n-k$.  Let
$\varphi(x_1,\ldots,x_k)=e^{-\frac{1}{2}\sum_{i=1}^k ||x_i||^2}$
which is orthogonally invariant.  In PIF the l.h.s. is a Gaussian
integral which can be easily evaluated.  The r.h.s. is the
$(n-k)$-{th} moment of a Gaussian determinant after
multiplication by an appropriate factor. Thus
\begin{eqnarray*}
(\sqrt{2\pi})^{kn}&=&\int_{(\mathbb{R}^n)^k}e^{-\frac{1}{2}\sum_{i=1}^k
||x_i||^2}\diff x_1\ldots \diff x_k\\
&=&\int_{(\mathbb{R}^k)^k}e^{-\frac{1}{2}\sum_{i=1}^k
||x_i||^2} C_{n,k} |\det [x_1,\ldots,x_k]|^{n-k}\diff x_1\ldots \diff x_k\\
&=&(\sqrt{2\pi})^{k^2}C_{n,k}\mathbb{E}[|\Delta_k|^{n-k}],
\end{eqnarray*}
where $\Delta_k$ is the determinant of a $k\times k$ matrix with
independent standard Gaussian entries. These determinants have
been studied since 1920's when Wishart introduced random
determinants in statistics.  It is known that (see \cite{})
\[
\mathbb{E}[|\Delta_k|^{2r}]= \big(\frac{k}{2}\big)^{-kr}
\prod_{j=1}^k \frac{\Gamma(r+\frac{j}{2})}{\Gamma(\frac{j}{2})}.
\]
For even $n-k$, we obtain:
\[
C_{n,k}=(\pi k)^{\frac{k(n-k)}{2}} \prod_{j=1}^k
\frac{\Gamma(\frac{n-k+j}{2})}{\Gamma(\frac{j}{2})}.
\]
\end{rem}

\begin{prop} \label{prop:2} Let $X$ be an $n\times k$ matrix with independent standard Gaussian
entries and let $X=OP$ be its polar decomposition. Then:
\begin{enumerate}
\item[(i)] $O$ and $P$ are independent.
\item[(ii)] $O$ is distributed according to the homogeneous measure on $\Onk$.
\item[(iii)] $P$ is distributed as a measure on $\Pk$ which is invariant
under orthogonal changes of coordinates $P\longrightarrow V^TPV$
$(V\in\Ok)$.
\end{enumerate}
\end{prop}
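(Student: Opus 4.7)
:}

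The whole argument rests on a single distributional fact: the density of $X$ is proportional to $e^{-\frac12\Trace(X^TX)}$, and $\Trace(X^TX)$ is invariant under both $X\mapsto UX$ for $U\in\On$ and $X\mapsto XV$ for $V\in\Ok$. Thus $X\stackrel{d}{=}UX\stackrel{d}{=}XV$. The strategy is to see what these two invariances impose on the pair $(O,P)$ via Proposition \ref{prop:poldec}.

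For parts (i) and (ii), I would use left invariance. If $X=OP$, then $UX=(UO)P$, and since $(UO)^T(UO)=O^TO=1_k$, this is exactly the polar decomposition of $UX$ (unique by Proposition \ref{prop:poldec}). Therefore $(UO,P)\stackrel{d}{=}(O,P)$ for every $U\in\On$. In particular, for every Borel set $B\subseteq\Pk$, the conditional law of $O$ given $P\in B$ is an $\On$-invariant probability measure on $\Onk$. By Proposition \ref{prop:1}, there is only one such measure, namely $\mu^*$. Hence the conditional law of $O$ given $P$ is $\mu^*$ regardless of $P$, which proves both the independence of $O$ and $P$ and that $O\sim\mu^*$.

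For part (iii), I would use right invariance. Given $V\in\Ok$, write $XV=OPV$. The key observation is that $V^TPV$ is still symmetric positive definite and satisfies $(V^TPV)(V^TPV)=V^TP^2V=(XV)^T(XV)$, so by uniqueness $\sqrt{(XV)^T(XV)}=V^TPV$. Consequently the polar decomposition of $XV$ is
\[XV=(OV)(V^TPV),\]
with $OV\in\Onk$ and $V^TPV\in\Pk$. Since $XV\stackrel{d}{=}X$, the uniqueness of polar decomposition gives $(OV,V^TPV)\stackrel{d}{=}(O,P)$, and marginalising yields $V^TPV\stackrel{d}{=}P$, which is the claimed invariance.

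The only place that requires a little care is the intermediate step in (iii) showing that the polar decomposition of $XV$ really is $(OV,V^TPV)$; the rest is a clean transfer of the two symmetries of the Gaussian law through the uniqueness in Propositions \ref{prop:poldec} and \ref{prop:1}. I expect no serious obstacle beyond verifying that square root identity.
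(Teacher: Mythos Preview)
Your proposal is correct and follows essentially the same route as the paper: establish the bi-invariance $UXV\stackrel{d}{=}X$, read off how each invariance transforms the polar factors via the uniqueness in Proposition~\ref{prop:poldec}, and invoke the uniqueness of the $\On$-invariant measure from Proposition~\ref{prop:1} to conclude both (i) and (ii) from the conditional law of $O$ given $P$. The only cosmetic difference is that the paper isolates the invariance $UXV\sim X$ as a separate lemma proved by a covariance computation, whereas you appeal directly to the form of the density $e^{-\frac12\Trace(X^TX)}$; these are equivalent justifications of the same fact.
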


To prove this proposition, we need the following lemma.

\begin{lem} \label{lm:1}
Let $U\in\On$, $V\in\Ok$ and $X$ be an $n\times k$ random matrix as
in $\fer{prop:2}$. (i.e. $X_{ij}$'s are independent standard
Gaussian random variables). Then $UXV\sim X$.
\end{lem}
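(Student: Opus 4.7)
\textbf{Proof plan for Lemma \ref{lm:1}.}

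The plan is to observe that the joint distribution of $X$ depends on $X$ only through the Frobenius norm $\|X\|_F^2 = \mathrm{tr}(X^T X)$, and that the map $X \mapsto UXV$ preserves both this quantity and Lebesgue measure on $\R^{nk}$.

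First, since the entries $X_{ij}$ are i.i.d. standard Gaussians, the joint density of $X$ (viewed as a point of $\R^{nk}$) is
\[
f(X) = (2\pi)^{-nk/2}\exp\!\left(-\tfrac{1}{2}\,\mathrm{tr}(X^T X)\right),
\]
so $f$ is constant on level sets of $\mathrm{tr}(X^T X)$.

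Second, for $Y = UXV$ with $U \in \On$, $V \in \Ok$, I would use the cyclic property of the trace together with $U^T U = 1_n$ and $V V^T = 1_k$ to get
\[
\mathrm{tr}(Y^T Y) = \mathrm{tr}(V^T X^T U^T U X V) = \mathrm{tr}(V^T X^T X V) = \mathrm{tr}(X^T X V V^T) = \mathrm{tr}(X^T X).
\]
Hence $f(UXV) = f(X)$.

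Third, I would observe that the linear map $\Phi : \Mnk \to \Mnk$, $X \mapsto UXV$ is an isometry of $\R^{nk}$ equipped with the Frobenius inner product $\langle A, B\rangle = \mathrm{tr}(A^T B)$, by the same computation as above. Consequently $\Phi$ is orthogonal on $\R^{nk}$, so $|\det \Phi| = 1$, and the change-of-variables formula shows that the density of $UXV$ at $X$ equals $f(\Phi^{-1}(X)) = f(X)$. Thus $UXV$ has the same density as $X$, which is what we want.

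There is no real obstacle here: the lemma is standard invariance of the Gaussian measure under orthogonal rotations, applied once on each side. The only subtlety is keeping track that the ``orthogonality'' relevant to the Jacobian is orthogonality with respect to the Frobenius inner product on the ambient $\R^{nk}$, not merely the fact that $U$ and $V$ are separately orthogonal in their own dimensions; fortunately the same trace manipulation establishes both properties at once.
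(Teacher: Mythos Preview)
Your proof is correct, but it follows a different route from the paper. The paper argues directly at the level of distributions: since the entries of $UXV$ are linear combinations of the $X_{ij}$'s, they are jointly Gaussian with mean zero, and a short covariance computation
\[
\Cov\!\big((UXV)_{ij},(UXV)_{rs}\big)=(UU^T)_{ir}(V^TV)_{js}=\delta_{ir}\delta_{js}
\]
shows that the entries of $UXV$ are again i.i.d.\ standard normals. Your argument instead works with the density and the change-of-variables formula: you observe that the Gaussian density on $\Mnk$ depends only on $\Trace(X^TX)$, which is preserved by $X\mapsto UXV$, and that this map is an isometry (hence volume-preserving) for the Frobenius inner product on $\R^{nk}$. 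Both approaches are short and standard; the covariance route avoids any appeal to change of variables and makes the independence of the new entries explicit, while your density argument is more geometric and immediately generalizes to any law on $\Mnk$ whose density is a function of $\Trace(X^TX)$ alone.
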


\begin{proof} The entries of $UXV$ are linear
combinations of $X_{ij}$'s and hence jointly Gaussian. So it is
sufficient to compute covariances:
\begin{align*}
\Cov\left((UXV)_{ij}\;,\;(UXV)_{rs}\right)&=
\Cov(\sum_{a,b}U_{ia}X_{ab}V_{bj}\;,\;\sum_{c,d}U_{rc}X_{cd}V_{ds})\\
&=\sum_{a,b,c,d}U_{ia}U_{rc}V_{bj}V_{ds}\Cov(X_{ab},X_{cd})\\
&=\sum_{a,b,c,d}U_{ia}U_{rc}V_{bj}V_{ds}\delta_{ac}\delta_{bd}\\
&=\sum_{a,b}U_{ia}U_{ra}V_{bj}V_{bs}\\
&=(\sum_aU_{ia}U_{ra})(\sum_bV_{bj}V_{bs})\\
&=(UU^T)_{ir}(VV^T)_{js}\;=\;\delta_{ir}\delta_{js}
\end{align*}
\end{proof}

\begin{proof}[Proof of Proposition$\fer{prop:2}$:]
\
\begin{enumerate}
\item[(ii)] By
lemma$\fer{lm:1}$, $UX\sim X$ for any $U\in\On$. But if $X=OP$ is
the polar decomposition of $X$, then $UX=(UO)P$ will be the polar
decomposition of $UX$, so $UO\sim O$ and this is the case for any
$U\in\On$, i.e. the distribution of $O$ is invariant under
the action of $\On$. Now the claim is concluded from Theorem $\fer{prop:1}$.
\item[(i)] Let $\mu(.|P)$ be the conditional probability measure induced
on $\Onk$ knowing the positive-definite part of polar
decomposition to be $P$. Again since $UX\sim X$ and $UX=(UO)P$,
one has $UO\sim O$ under $\mu(.|P)$ which implies that $\mu(.|P)$
is distributed as the homogeneous measure on $\Onk$
which does not depend on $P$. Hence $O$ and $P$ are independent.
\item[(iii)] By Lemma $\fer{lm:1}$, $XV\sim X$ for $V\in\Ok$. Now if
$X=OP$ is the polar decomposition of $X$, the one of $XV$ will be
$XV=(OV)(V^TPV)$ and hence $V^TPV\sim P$.
\end{enumerate}
\end{proof}

\begin{cor} \label{crl:1}
Using the polar decomposition isomorphism $\tMnk\approx\Onk\times\Pk$,
one has $\diff X=\diff O\times \diff\mu_{n,k}$ where $\diff X$ is the standard
Lesbegue measure on $\Mnk$, $\diff O$ is the homogeneous measure on
$\Onk$ and $\mu_{n,k}$ is a measure on $\Pk$ invariant under
orthogonal changes of basis $P\longrightarrow V^TPV$, $V\in\Ok$.
\end{cor}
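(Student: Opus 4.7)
The plan is to bootstrap the statement from Proposition~\ref{prop:2} applied to a Gaussian reference measure. Consider the standard Gaussian measure $\gamma$ on $\Mnk$, whose density with respect to $\diff X$ is $f(X) = (2\pi)^{-nk/2}\exp(-\tfrac12\Trace(X^TX))$. Since $X^TX = P^2$ when $X = OP$ is the polar decomposition, this density factors through the positive-definite part: writing $f(X) = h(P)$ with $h(P) := (2\pi)^{-nk/2}\exp(-\tfrac12\Trace(P^2))$, the function $h$ is continuous and strictly positive on $\Pk$.

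By Proposition~\ref{prop:2}, under the polar decomposition homeomorphism $\Phi:\Onk\times\Pk\to\tMnk$, $(O,P)\mapsto OP$, the pushforward of $\gamma$ is a \emph{product} measure $\mu^*\otimes\pi$, where $\mu^*$ is the homogeneous measure on $\Onk$ and $\pi$ is the distribution of $P$ (which by part (iii) is invariant under $P\mapsto V^TPV$). Equivalently, for every nonnegative Borel $\psi$ on $\tMnk$,
\[
  \int_{\tMnk}\psi(X)\,h(P)\diff X \;=\; \int_{\Onk}\int_{\Pk}\psi(OP)\diff\mu^*(O)\diff\pi(P).
\]
Since $h>0$ everywhere, I would divide through and define $\diff\mu_{n,k}(P) := h(P)^{-1}\diff\pi(P)$, yielding the desired identity $\diff X = \diff O\otimes\diff\mu_{n,k}$ on $\tMnk$ (which has full Lebesgue measure in $\Mnk$, so nothing is lost).

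Finally, the invariance of $\mu_{n,k}$ under $P\mapsto V^TPV$ follows from the corresponding invariance of $\pi$ combined with the observation that $h(V^TPV) = h(P)$, because $\Trace((V^TPV)^2) = \Trace(P^2)$. The only genuine subtlety is the measure-theoretic ``division by $h$'', but this is harmless: $h$ is everywhere positive and depends only on the second coordinate of $\Onk\times\Pk$, so $\mu_{n,k}$ is a well-defined (though no longer finite) Borel measure on $\Pk$, and the resulting product decomposition of $\diff X$ is unambiguous.
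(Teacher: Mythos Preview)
Your proof is correct and follows essentially the same route as the paper: start from the Gaussian reference measure, use Proposition~\ref{prop:2} to split it as a product $\diff O\times\diff\rho_{n,k}$ (your $\pi$), then divide out the Gaussian density $h(P)=(2\pi)^{-nk/2}e^{-\frac12\Trace(P^2)}$ to recover $\diff X$, and finally check that $h$ is conjugation-invariant so the resulting $\mu_{n,k}$ inherits the invariance of $\pi$. The paper's argument is identical up to notation; your version is slightly more explicit about why the division by $h$ is legitimate.
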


\begin{proof} Note that the random matrix $X$ described
in Theorem $\fer{prop:2}$ defines the following probability
measure on $\Mnk$:
\[\prod_{i,j}(\frac{1}{\sqrt{2\pi}}e^{-\frac{1}{2}X_{ij}^2}\diff X_{ij})=(\frac{1}{\sqrt{2\pi}})^{nk}e^{-\frac{1}{2}\sum_{i,j}X_{ij}^2}\diff X=(\frac{1}{\sqrt{2\pi}})^{nk}e^{-\frac{1}{2}\Trace (X^TX)}\diff X.\]
Proposition $\fer{prop:2}$ implies
\[(\frac{1}{\sqrt{2\pi}})^{nk}e^{-\frac{1}{2}\Trace (X^TX)}\diff X = \diff O\times\diff \rho_{n,k},\]
where $\rho_{n,k}$ is invariant under $P\longrightarrow V^TPV$. Now
note that $X^TX=P^2$, so
\[\diff X = \diff O\times(\sqrt{2\pi})^{nk}e^{\frac{1}{2}\Trace (P^2)}\diff\rho_{n,k}.\]
Since $\Trace (P^2)=\Trace ((V^TPV)^2)$, defining $\mu_{n,k}$
by:
\[\diff\mu_{n,k} = (\sqrt{2\pi})^{nk}e^{\frac{1}{2}\Trace (P^2)}\diff\rho_{n,k}\]
proves the claim.
\end{proof}

\begin{lem} \label{lm:2}
The measure $\diff P=\prod_{i\leq j}\diff P_{ij}$ on $\Pk$ is invariant under
orthogonal changes of coordinates $P\longrightarrow V^TPV$
$(V\in\Ok)$.
\end{lem}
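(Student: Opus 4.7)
The key point is that the transformation $\Psi_V : P \mapsto V^T P V$ is a \emph{linear} map on the finite-dimensional real vector space $\mathrm{Sym}_k$ of symmetric $k\times k$ matrices. It does map $\mathrm{Sym}_k$ to itself, since $(V^T P V)^T = V^T P^T V = V^T P V$. Therefore the change-of-variables formula reduces to computing a single Jacobian, namely $|\det \Psi_V|$ with $\Psi_V$ viewed as an endomorphism of $\mathrm{Sym}_k$.

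My plan is to avoid writing down the matrix of $\Psi_V$ in the coordinates $(P_{ij})_{i\le j}$ (which would be tedious) and instead argue that $\Psi_V$ is orthogonal with respect to a suitable inner product. Equip $\mathrm{Sym}_k$ with the Frobenius inner product $\langle A,B\rangle = \Trace(AB)$. For $V\in\Ok$, cyclicity of the trace together with $VV^T = 1_k$ gives
\[
\langle \Psi_V(A),\Psi_V(B)\rangle
= \Trace(V^T A V V^T B V)
= \Trace(V^T AB V)
= \Trace(AB V V^T)
= \langle A,B\rangle,
\]
so $\Psi_V$ is an isometry. In any basis of $\mathrm{Sym}_k$ that is orthonormal with respect to $\langle\cdot,\cdot\rangle$, the matrix of $\Psi_V$ is therefore orthogonal, hence has determinant $\pm 1$.

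Now the absolute value of a determinant of a linear map is a basis-independent quantity. Hence $|\det \Psi_V| = 1$ also in the coordinate basis $\{E_{ii}\}\cup\{E_{ij}+E_{ji} : i<j\}$ in which $dP = \prod_{i\le j}\diff P_{ij}$ is the associated Lebesgue measure. This immediately gives invariance of $dP$ under $P\mapsto V^T P V$.

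I expect no real obstacle here; the only thing worth flagging is the conceptual point that the standard Lebesgue measure $\prod_{i\le j}\diff P_{ij}$ is defined relative to the coordinate basis, whereas the isometry argument naturally uses an orthonormal basis for the Frobenius inner product. These two bases differ, but what matters is only $|\det \Psi_V|$, which is a basis-invariant of the linear endomorphism $\Psi_V$, so the computation in one basis transfers to the other.
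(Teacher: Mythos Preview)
Your argument is correct and takes a genuinely different route from the paper. The paper proceeds probabilistically, in keeping with its overall theme: it places independent standard Gaussians on the entries $P_{ij}$ ($i\le j$) of a symmetric matrix, attempts to show that $Q=V^TPV$ has the same law as $P$ by matching covariances, and then divides out the conjugation-invariant density $e^{-\frac12\Trace(P^2)}$ to conclude that $\diff P$ itself is invariant. You instead note directly that $\Psi_V:P\mapsto V^TPV$ is a linear endomorphism of $\mathrm{Sym}_k$ preserving the Frobenius inner product, hence $|\det\Psi_V|=1$ in every basis, in particular in the coordinate basis $\{E_{ii}\}\cup\{E_{ij}+E_{ji}:i<j\}$ that defines $\diff P=\prod_{i\le j}\diff P_{ij}$. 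Your route is shorter and more robust; indeed, the specific Gaussian ensemble the paper uses (equal variance on and off the diagonal) is \emph{not} exactly conjugation-invariant --- for instance, with a $45^\circ$ rotation in $k=2$ one gets $\Var(Q_{11})=\tfrac32$ --- so the covariance identity claimed there does not hold as written (one needs the GOE normalisation with doubled diagonal variance), whereas your isometry argument sidesteps this issue entirely.
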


\begin{proof} We prove the statement on the larger space
of symmetric $k\times k$ matrices; say $\mathcal{S}(k)$. To do so,
Let $P_{ij}$'s $(i\leq j)$ be independent standard Gaussian and
$P_{ji}=P_{ij}$ for $i<j$. Now $Q=V^TPV$ is again a symmetric
random matrix whose entries are linear combinations of
$P$ entries; hence jointly Gaussian. Moreover, for $i\leq j$, $i'\leq j'$
we may write:
\begin{align*}
\Cov(Q_{ij}\;,\;Q_{i'j'})
&= \Cov(\sum_{s,t}V_{si}V_{tj}P_{st}\;,\;\sum_{s',t'}V_{s'i'}V_{t'j'd}P_{s't'})\\
&=\sum_{s,t,s',t'}V_{si}V_{tj}V_{s'i'}V_{t'j'}\Cov(P_{st},P_{s't'})\\
&=\sum_{s,t} V_{si}V_{tj}V_{si'}V_{tj'}\\
&=(\sum_sV_{si}V_{si'})(\sum_tV_{tj}V_{tj'})\\
&=(V^TV)_{ii'}(V^TV)_{jj'}\;=\;\delta_{ii'}\delta_{jj'}\;=\;\delta_{(i,j)(i',j')}
\end{align*}
and therefore $Q=V^TPV\sim P$. Bearing in mind that the probability measure
for the random matrix above is
\[\prod_{i\leq j}(\frac{1}{\sqrt{2\pi}}e^{-\frac{1}{2}P_{ij}^2}\diff P_{ij})
=(\frac{1}{\sqrt{2\pi}})^{\frac{k(k+1)}{2}}e^{-\frac{1}{2}\Trace (P^2)}\diff P,\] we have proved that
\[(\frac{1}{\sqrt{2\pi}})^{\frac{k(k+1)}{2}}e^{-\frac{1}{2}\Trace (P^2)}\diff P
=(\frac{1}{\sqrt{2\pi}})^{\frac{k(k+1)}{2}}e^{-\frac{1}{2}\Trace (Q^2)}\diff Q.\]
But $\Trace (Q^2)=\Trace (P^2)$ and hence $\diff P=\diff Q$.
\end{proof}

\begin{proof}[Proof of MPIF]
 Consider $\tMnk$ as an
open dense subset of $\R^{nk}$ parameterized by $X_{ij}$'s; $1\leq
i\leq n$ , $1\leq j\leq k$ and $\Pk$ as an open set in
$\R^{\frac{k(k+1)}{2}}$ parameterized by $p_{ij}$'s; $1\leq i\leq
j\leq k$ and assume that $u_1,\ldots u_m$ give a smooth
parameterization of an open dense subset of the manifold $\Onk$ in
which $m=nk-\frac{k(k+1)}{2}$. We are going to compute the
Jacobian of the transformation
\[X=F(O,P)=F(u_r,p_{ij};\;1\leq r\leq m,\;1\leq i\leq j\leq k)=OP\]
Let us show the $i^{th}$ column of any matrix $A$ by $A_i$.
Differentiating $X_i=OP_i$ with respect to $u_r$'s and $p_{st}$'s
leads us to the followings:
\begin{eqnarray}
\frac{\partial{X_i}}{\partial{u_r}}=\frac{\partial{O}}{\partial{u_r}}P_i\\[5pt]
\frac{\partial{X_i}}{\partial{p_{st}}}=O\frac{\partial{P_i}}{\partial{p_{st}}}=\delta_{it}O_s+\delta_{is}O_t\\[5pt]
\frac{\partial{X_i}}{\partial{p_{ll}}}=O\frac{\partial{P_i}}{\partial{p_{ll}}}=\delta_{il}O_l
\end{eqnarray}
Now let $\widetilde X$ be the $1\times nk$ row-vector
$[X_1^T,\ldots,X_k^T]$ then the Jacobian is:
\[J\quad=\left[
\begin{array} {c}
\underline{\;\;\;\frac{\partial{\widetilde X_i}}{\partial{u_r}}\;\;\;} \\ \\
\frac{\partial{\widetilde X_i}}{\partial{p_{st}}} \\ \\
\frac{\partial{\widetilde X_i}}{\partial{p_{ll}}}
\end{array}
\right]
\begin{array} {l}
^{(1\leq r\leq m)} \\ \\
_{(1\leq s<t\leq k)} \\ \\
_{(1\leq l\leq k)}
\end{array}
\quad=\left[
\begin{array} {c}
\underline{(\frac{\partial{O}}{\partial{u_r}}P_1)^T\;\cdots\;(\frac{\partial{O}}{\partial{u_r}}P_k)^T} \\ \\
\cdots\;O_t^T\;\cdots\;O_s^T\;\cdots \\ \\
\cdots\;O_l^T\;\cdots
\end{array}
\right]
\begin{array} {l}
\leftarrow u_r \\ \\
\leftarrow p_{st} \\ \\
\leftarrow p_{ll}
\end{array}
\]
where below the separation line, dots mean zeros. Now
$\diff X=|\det(J)|\diff U\times \diff P$. But by Corollary $\fer{crl:1}$, we
also have $\diff X=\diff O\times \diff\mu_{n,k}$. Comparing these two
representation, we conclude that $|\det(J)|$ can be decomposed as
$|\det(J)|=g(O)f(P)$; $\diff O=g(O)\diff U$ and $\diff\mu_{n,k}=f(P)\diff P$.
Moreover, $f(P)$ is invariant under orthogonal change of
coordinates since both $\mu_{n,k}$ and $\diff P$ have this
property. (Corollary $\fer{crl:1}$ and lemma$\fer{lm:2}$). This
shows that $f$ depends only on eigenvalues of $P$; say
$\lb_1,\ldots,\lb_k$. i.e. $f(P)=f(\lb_1,\ldots,\lb_k)$. This fact
allows us to compute $f$ in the case that
$P=\mathrm{diag}(\lb_1,\ldots,\lb_k)$ is a diagonal matrix. Doing so, we
have:
\[\det(J)=\det\left[
\begin{array} {c}
\underline{\lb_1(\frac{\partial{O_1}}{\partial{u_r}})^T\;\;\cdots\;\;\lb_k(\frac{\partial{O_k}}{\partial{u_r}})^T} \\ \\
\cdots\;O_t^T\;\cdots\;O_s^T\;\cdots \\ \\
\cdots\;O_l^T\;\cdots
\end{array}
\right]
\begin{array} {l}
\leftarrow u_r \\ \\
\leftarrow p_{st} \\ \\
\leftarrow p_{ll}
\end{array}\]
Factor $\lb_1$ from first n columns, $\lb_2$ from second n columns
and so on, then we have:
\[\det(J)=(\lb_1\ldots\lb_k)^n\ \det\left[
\begin{array} {c}
\underline{\quad(\frac{\partial{O_1}}{\partial{u_r}})^T\;\;\;\cdots\;\;\;(\frac{\partial{O_k}}{\partial{u_r}})^T\quad} \\ \\
\cdots\;\lb_s^{-1}O_t^T\;\cdots\;\lb_t^{-1}O_s^T\;\cdots \\ \\
\cdots\;\lb_l^{-1}O_l^T\;\cdots
\end{array}
\right]
\begin{array} {l}
\leftarrow u_r \\ \\
\leftarrow p_{st} \\ \\
\leftarrow p_{ll}
\end{array}\]
Again by factoring $(\lb_s\lb_t)^{-1}$ from the rows which
correspond to $p_{st}$'s and $\lb_l^{-1}$ from the rows which
correspond to $p_{ll}$'s we end up with the following:
\[\det(J)=(\lb_1\ldots\lb_k)^{n-k}\ \det\left[
\begin{array} {c}
\underline{(\;\frac{\partial{O_1}}{\partial{u_r}})^T\;\;\;\cdots\;\;\;(\frac{\partial{O_k}}{\partial{u_r}})^T\;} \\ \\
\cdots\;\lb_tO_t^T\;\cdots\;\lb_s O_s^T\;\cdots \\ \\
\cdots\;O_l^T\;\cdots
\end{array}
\right]
\begin{array} {l}
\leftarrow u_r \\ \\
\leftarrow p_{st} \\ \\
\leftarrow p_{ll}
\end{array}\]
Let \[B = \left[
\begin{array} {c}
\underline{\;(\frac{\partial{O_1}}{\partial{u_r}})^T\;\;\;\cdots\;\;\;(\frac{\partial{O_k}}{\partial{u_r}})^T\;} \\ \\
\cdots\;\lb_tO_t^T\;\cdots\;\lb_s O_s^T\;\cdots \\ \\
\cdots\;O_l^T\;\cdots
\end{array}
\right]
\begin{array} {l}
\leftarrow u_r \\ \\
\leftarrow p_{st} \\ \\
\leftarrow p_{ll}
\end{array}\]
Now look at the following $nk\times1$ column vectors:
\[R_l=\left[\begin{array}{c} \vdots\\ \vdots\\ O_l\\ \vdots\\ \vdots \end{array}\right]
\begin{array}{l} \;\\ \;\\ \leftarrow l^{th}\;n\times1\;vector\\ \;\\ \;\end{array}
\quad\quad\quad\quad\quad
R_{st}=\frac{1}{\sqrt2}\left[\begin{array}{c} \vdots\\ O_t\\
\vdots\\ O_s\\ \vdots\end{array}\right]
\begin{array}{l} \;\\ \leftarrow s^{th}\;n\times1\;vector\\ \;\\ \leftarrow t^{th}\;n\times1\;vector\\ \;\end{array}\]
where dots stand for zeros. It can be easily seen that these are
unit orthogonal vectors in $\R^{nk}$ and hence can be extended to an
orthogonal basis and we may form the following orthogonal $nk\times
nk$ matrix:
\[U=\left[\;\cdots\;\left|\begin{array}{c}\\ R_{st} \\ \\ \end{array}\right|\;R_l\;\right]\]
We need some observations here to continue the proof. First,
$R_l$'s and $R_{st}$'s do not depend on $\lb_i$'s; so we may
choose the first $nk-\frac{k(k+1)}{2}$ columns of $U$ independent
of them, too. Next, Observe that the product of $r^{th}$ row of
$B$ by $\sqrt2R_{st}$ yields:
\[\left(\frac{\partial{O_s}}{\partial{u_r}}\right)^T O_t+\left(\frac{\partial{O_t}}{\partial{u_r}}\right)^T O_s\;=\;\left(\frac{\partial{O_s}}{\partial{u_r}}\right)^T O_t+O_s^T\:\frac{\partial{O_t}}{\partial{u_r}}\;=\;\frac{\partial}{\partial{u_r}}\left(O_s^TO_t\right)\;=\;0\]
because $O_s^T O_t=0$. The same is true for $R_l$ since
$O_l^T O_l=1$. Also one can see that the lower rows of $B$ are
orthogonal to $R_l$'s and $R_{st}$'s except in the case that the
indices are the same. All these facts together imply that $BU$ has
the following form:
\[BU=\left[\begin{array}{ccc}\bigstar&0&0\\ \ast&\mathrm{diag}(\frac{\lb_t+\lb_s}{\sqrt2})&0\\ \ast&0&1_k\end{array}\right]\]
in which, $\lb_i$'s do not appear in $\bigstar$. Keeping in mind
that $\det(B)=\det(BU)$, we find out that there exists a constant
$D_{n,k}$ such that
\[f(P)=f(\lb_1,\ldots,\lb_k)=D_{n,k}(\lb_1\ldots\lb_k)^{n-k}\prod_{s<t}(\lb_s+\lb_t)=D_{n,k}(\det P)^{n-k}\prod_{s<t}(\lb_s+\lb_t)\]
which completes the proof.
\end{proof}

\bibliographystyle{abbrv}
\bibliography{1}

\end{document}